\newtheorem{theorem}{Theorem}[section]
\newtheorem{lemma}[theorem]{Lemma}
\newtheorem{prop}[theorem]{Proposition}
\newtheorem{corollary}[theorem]{Corollary}
\theoremstyle{definition}
\theoremstyle{remark}
\newtheorem{remark}[theorem]{Remark}
\numberwithin{equation}{section}
\begin{document}
\setcounter{page}{1}

\title{On reverses of the Golden-Thompson type inequalities}

\author[M.B. Ghaemi, V. Kaleibary and S. Furuichi]{Mohammad Bagher Ghaemi, Venus Kaleibary and Shigeru Furuichi}

\address{ School of Mathematics, Iran University of Science and Technology, Narmak, Tehran 16846-13114, Iran.}
\email{\textcolor[rgb]{0.00,0.00,0.84}{mghaemi@iust.ac.ir}}

\address{ School of Mathematics, Iran University of Science and Technology, Narmak, Tehran 16846-13114, Iran.}
\email{\textcolor[rgb]{0.00,0.00,0.84}{v.kaleibary@gmail.com}}

\address{ Department of Information Science, College of Humanities and Sciences, Nihon University,
3-25-40, Sakurajyousui, Setagaya-ku, Tokyo, 156-8550, Japan.}
\email{\textcolor[rgb]{0.00,0.00,0.84}{furuichi@chs.nihon-u.ac.jp}}


\subjclass[2010]{Primary 15A42; Secondary 15A60, 47A63.}

\keywords{Ando-Hiai inequality, Golden-Thompson inequality, Eigenvalue inequality, Geometric mean, Olson order, Specht ratio, Generalized Kantorovich constant, Unitarily invariant norm.}


\begin{abstract}
 In this paper we present some reverses of the Golden-Thompson type inequalities: 
 Let $H$ and $K$ be Hermitian matrices such that $ e^s e^H \preceq_{ols} e^K \preceq_{ols} e^t e^H$  for
some scalars $s \leq t$, and $\alpha \in [0 , 1]$. Then for all $p>0$ and $k =1,2,\ldots, n$
\begin{align*} \label{}
  \lambda_k (e^{(1-\alpha)H + \alpha K} )  \leq  (\max \lbrace S(e^{sp}), S(e^{tp})\rbrace)^{\frac{1}{p}} \lambda_k (e^{pH} \sharp_\alpha e^{pK})^{\frac{1}{p}},
\end{align*}
where $A\sharp_\alpha B = A^\frac{1}{2} \big ( A^{-\frac{1}{2}} B^\frac{1}{2} A^{-\frac{1}{2}} \big) ^\alpha A^\frac{1}{2}$ is $\alpha$-geometric mean, $S(t)$ is the so called Specht's ratio and $\preceq_{ols}$ is the so called Olson order. The same inequalities are also provided with other constants. The obtained inequalities improve some known results. 
\end{abstract} \maketitle

\section{introduction}

In what follows, capital letters $A,B,H$and $K$ stand for $n \times n$ matrices or bounded linear operators on an $n$-dimentional complex Hilbert space  $(\mathcal{H}, \langle \;\cdot\; \rangle)$.  For a pair $A,B$ of Hermitian matrices, we say $A \leq B$ if $B-A\geq 0$. Let $A$ and $B$ be two positive definite matrices. For
each $\alpha \in [0, 1]$, the weighted geometric mean $A\sharp_\alpha  B$ of $A$ and $B$ in the sense of
Kubo-Ando \cite{KA} is defined by
\begin{align*}
A\sharp_\alpha B = A^\frac{1}{2} \big ( A^{-\frac{1}{2}} B^\frac{1}{2} A^{-\frac{1}{2}} \big) ^\alpha A^\frac{1}{2}.
\end{align*}
 
\noindent Also for positive definite matrices $A$ and $B$, the weak log-majorization $A \prec_{w log} B$ means that
 \begin{align*}
\prod_{j=1}^k \lambda_j (A) \leq \prod_{j=1}^k \lambda_j (B), \;\;\;\;\;\;\; k=1,2,\cdots ,n,
 \end{align*}
 where $\lambda_1 (A) \geq \lambda_2 (A) \geq  \cdots  \geq \lambda_n (A)$ are the eigenvalues of $A$ listed in decreasing order. If equality holds when $k = n$, we have the log-majorization $A \prec_{log} B$. 
It is known that the weak log-majorization  $A \prec_{w log} B $ implies $\| A \|_u \leq \| B \|_u$ for any unitarily invariant norm $\| \cdot \|_u$ , i.e. $ \| UAV \|_u = \| A \|_u$ for all $A$ and all unitaries $U, V$. See \cite{B1} for theory of
majorization.

In \cite{S2}, Specht obtained an inequality for the arithmetic and geometric means of positive
numbers: Let $x_1 \geq \ldots \geq x_n > 0$ and set $t = x_1/x_n$. Then
 \begin{align*}
 \dfrac{x_1 +\ldots + x_n}{n} \leq S(t) (x_1\ldots  x_n)^{\frac{1}{n}},
 \end{align*}
where
 \begin{align}\label{1}
S(t) =\dfrac{ (t - 1) t^{1/(t-1)}}{e \log t} \;\;\;\;\;(t\neq 1)\;\;\;\; \mbox{and} \;\;\;\; S(1)=1
 \end{align}
is called the Specht ratio at $t$. Note that $\lim_{p\rightarrow 0} S(t^p)^\frac{1}{p} = 1$, $S(t^{-1}) = S(t) >1$ for $t\neq 1, t>0$ \cite{F}. Specht's inequality is a ratio type reverse inequality
of the classical arithmetic-geometric mean inequality. Using this nice ratio we can state our main result in Section 2. 

The Golden-Thompson trace inequality, which is of importance in statistical mechanics and in the theory of random matrices, states that $Tr e^{H+K} \leq Tr e^H e^K$
 for arbitrary Hermitian matrices $H$ and $K$. This inequality has been complemented in several ways \cite{TF, HP}. Ando and Hiai in \cite{TF} proved that for every unitarily invariant norm $\| \cdot \|_u$ and $p > 0$  
\begin{align}\label{17}
\| (e^{pH} \sharp_\alpha e^{pK})^{\frac{1}{p}} \|_u \leq \|e^{(1-\alpha)H + \alpha K} \|_u.
\end{align}

Seo in \cite{S} found some upper bounds on $\|e^{(1-\alpha)H + \alpha K} \|_u$ in terms
of scalar multiples of $\| (e^{pH} \sharp_\alpha e^{pK})^{\frac{1}{p}} \|_u$, which
show reverse of the Golden-Thompson type inequality \eqref{17}.
In this paper we establish another reverses of this inequality, which improve and refine Seo's results. In fact the general sandwich condition $sA \leq B \leq t A$ for positive definite matrices, is the key for our statements. Also, the so called Olson order $\preceq_{ols}$ is used. For positive operators, $A\preceq_{ols} B$ if and only if $A^r \leq B^r$ for every $r \geq 1$\cite{O}. Our results  are parallel to eigenvalue inequalities obtained in \cite{BS} and \cite{Gk}.

 \section{reverse inequalities via specht ratio}
 
To study the Golden-Thompson inequality, Ando-Hiai in \cite{TF} developed the following log-majorizationes:
 \begin{align*}\label{}
A^r \sharp_\alpha B^r \prec_{log} ( A \sharp_\alpha B )^r , \hspace*{1 cm} r\geq 1, 
\end{align*}
or equivalently
\begin{align*}\label{}
 (A^p \sharp_\alpha B ^p)^{\frac{1}{p}} \prec_{log}  ( A^q \sharp_\alpha B^q )^{\frac{1}{q}}, \hspace*{1 cm} 0< q \leq p.
\end{align*}
There are some literatures \cite{S1} on the converse of these inequalities in terms of unitarily invariant norm $\| \cdot \|_u$.
By the following lemmas, we obtain a new reverse of these inequalities in terms of eigenvalue inequalities.

\begin{lemma} \label{l1}
Let $A$ and $B$ be positive definite matrices such that $ s A \leq B \leq t A$ for some scalars $0 < s \leq t$, and $\alpha \in [0 , 1]$. Then
\begin{align} \label{13}
A^r \sharp_\alpha B^r \leq (\max \lbrace S(s), S(t)\rbrace)^r  ( A \sharp_\alpha B )^r, \hspace*{1cm} 0< r \leq 1,
\end{align}
 where $S(t)$ is the Specht's ratio defined as \eqref{1}.
\end{lemma}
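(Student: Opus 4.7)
\emph{Plan.} I would reduce the statement to the operator-level reverse arithmetic--geometric-mean inequality $(1-\alpha)A+\alpha B\leq C\,(A\sharp_\alpha B)$, where $C:=\max\{S(s),S(t)\}$, and then bootstrap it to $r$-th powers using the fact that $x\mapsto x^r$ is both operator monotone and operator concave on $(0,\infty)$ for $r\in(0,1]$.

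First I would establish the scalar inequality $(1-\alpha)+\alpha x\leq C\,x^\alpha$ for every $x\in[s,t]$ and $\alpha\in[0,1]$. Writing $f_\alpha(x):=((1-\alpha)+\alpha x)/x^\alpha$, a derivative computation shows $f_\alpha$ is decreasing on $(0,1)$ and increasing on $(1,\infty)$, so its maximum on $[s,t]$ is attained at an endpoint; moreover, the standard one-variable calculation identifies $\sup_{\alpha\in[0,1]}f_\alpha(h)$ with the Specht ratio $S(h)$ for each $h>0$. Hence $f_\alpha(x)\leq\max\{S(s),S(t)\}=C$. Setting $X:=A^{-1/2}BA^{-1/2}$, the hypothesis $sA\leq B\leq tA$ gives $sI\leq X\leq tI$; applying the scalar bound via functional calculus yields $(1-\alpha)I+\alpha X\leq C\,X^\alpha$, and a congruence transformation by $A^{1/2}$ produces the operator-level reverse AM--GM inequality $(1-\alpha)A+\alpha B\leq C\,(A\sharp_\alpha B)$.

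The lemma then follows from a three-link chain. The AM--GM inequality for Kubo--Ando means gives $A^r\sharp_\alpha B^r\leq(1-\alpha)A^r+\alpha B^r$; operator concavity of $x\mapsto x^r$ for $r\in(0,1]$ bounds the right-hand side by $((1-\alpha)A+\alpha B)^r$; finally, operator monotonicity of the same power applied to the reverse AM--GM above yields $((1-\alpha)A+\alpha B)^r\leq C^r(A\sharp_\alpha B)^r$. Composing the three estimates gives the claim.

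The delicate point is the first step: pinning the constant down to exactly $\max\{S(s),S(t)\}$ rather than some larger quantity coming from separate endpoint bounds on $X$. Everything that follows is routine, but the plan genuinely uses $r\leq1$ twice---once for operator concavity and once for operator monotonicity of $x\mapsto x^r$---so the hypothesis $0<r\leq1$ is indispensable in this approach.
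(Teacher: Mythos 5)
Your proposal is correct and is in substance the same argument the paper uses: the paper simply invokes \cite[Theorem 1]{GK} (the inequality $f(A)\sharp_\alpha f(B)\leq f\big(M(A\sharp_\alpha B)\big)$ for operator monotone $f$ with $M=\max\{S(s),S(t)\}$) and specializes to $f(t)=t^r$, and your three-link chain --- Young's operator inequality, operator concavity of $t^r$, and the Specht-ratio reverse AM--GM obtained from the scalar bound on $[s,t]$ via $X=A^{-1/2}BA^{-1/2}$ --- is exactly the standard proof of that cited result. The only difference is that you make the argument self-contained rather than citing it.
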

\begin{proof}
Let $f$ be an operator monotone function on $[0, \infty)$. Then according to the proof of Theorem 1 in \cite{GK}, we have
\begin{align*}
f (A)\sharp_\alpha f (B)\leq f (M (A\sharp_\alpha B)),
\end{align*}
where $M= \max \lbrace S(s),S(t)\rbrace$. Putting $f(t) = t^r$ for $0< r \leq 1$, we reach inequality \eqref{13}.
\end{proof}

\begin{lemma} \label{l2}
Let $A$ and $B$ be positive definite matrices such that $ s A \preceq_{ols} B \preceq_{ols} t A$ for some scalars $0 < s \leq t$, and $\alpha \in [0 , 1]$. Then
\begin{align} \label{14}
 \lambda_k (A \sharp_\alpha B)^r \leq  \max \lbrace S(s^r), S(t^r)\rbrace   \lambda_k ( A^r \sharp_\alpha B^r ), \hspace*{1cm} r \geq 1,
\end{align}
and hence,
\begin{align} \label{15}
 \lambda_k (A^q \sharp_\alpha B^q)^{\frac{1}{q}} \leq (\max \lbrace S(s^p), S(t^p)\rbrace)^{\frac{1}{p}} \lambda_k ( A^p \sharp_\alpha B^p)^{\frac{1}{p}}, \hspace*{0.5cm}0 < q \leq p,
\end{align}
 where $S(t)$ is the Specht's ratio defined as \eqref{1} and $k=1,2,\ldots, n$.
\end{lemma}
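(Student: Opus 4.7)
\medskip
\noindent\textbf{Proof proposal.}
The plan is to obtain \eqref{14} by reducing to Lemma~\ref{l1} applied to the pair $(A^r, B^r)$, and then to deduce \eqref{15} from \eqref{14} by a substitution.

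The first step is to translate the Olson sandwich condition into an ordinary operator sandwich condition for powers. Since $sA \preceq_{ols} B \preceq_{ols} tA$, by definition $s^r A^r \leq B^r \leq t^r A^r$ for every $r \geq 1$. For $0 < r < 1$ the same inequality $s^r A^r \leq B^r \leq t^r A^r$ still follows from the base case $r=1$ (namely $sA \leq B \leq tA$) by applying the Löwner--Heinz theorem to the operator monotone map $x \mapsto x^r$, using $(sA)^r = s^r A^r$. Thus in all cases $r>0$ we have the scalar operator inequality $s^r A^r \leq B^r \leq t^r A^r$.

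The second step is to apply Lemma~\ref{l1} to the positive definite pair $(A^r, B^r)$ with constants $(s^r, t^r)$ and exponent $1/r \in (0,1]$ (here $r \geq 1$). This yields
\begin{align*}
(A^r)^{1/r} \sharp_\alpha (B^r)^{1/r} \leq \bigl(\max\{S(s^r), S(t^r)\}\bigr)^{1/r}\, (A^r \sharp_\alpha B^r)^{1/r},
\end{align*}
which simplifies to $A \sharp_\alpha B \leq \bigl(\max\{S(s^r),S(t^r)\}\bigr)^{1/r} (A^r \sharp_\alpha B^r)^{1/r}$. Applying $\lambda_k$ to both sides (using monotonicity of $\lambda_k$ under the positive semidefinite order and the identity $\lambda_k(X^{1/r}) = \lambda_k(X)^{1/r}$ for positive $X$), and then raising to the $r$-th power, produces \eqref{14}.

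For \eqref{15}, the idea is to reapply \eqref{14} to $(A^q, B^q)$ with the substitution $r \mapsto p/q \geq 1$. This is legitimate because the Olson condition is preserved under arbitrary positive powers: the first step of the argument shows $s^q A^q \leq B^q \leq t^q A^q$, and then an identical Löwner--Heinz/definition-of-Olson-order argument gives $s^q A^q \preceq_{ols} B^q \preceq_{ols} t^q A^q$. Inserting these data into \eqref{14} yields
\begin{align*}
\lambda_k(A^q \sharp_\alpha B^q)^{p/q} \leq \max\{S(s^p), S(t^p)\}\, \lambda_k(A^p \sharp_\alpha B^p),
\end{align*}
and raising both sides to the $1/p$-th power gives \eqref{15}.

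The only genuinely delicate point is the first step: one must be sure that the Olson sandwich transfers to the scalar operator sandwich $s^r A^r \leq B^r \leq t^r A^r$ for every $r > 0$ (rather than only $r\geq 1$, as the definition of $\preceq_{ols}$ supplies). Once this is in place, both \eqref{14} and \eqref{15} are routine consequences of Lemma~\ref{l1} together with the standard identities $\lambda_k(cX)=c\lambda_k(X)$ and $\lambda_k(X^\beta)=\lambda_k(X)^\beta$ for positive $X$ and $\beta>0$.
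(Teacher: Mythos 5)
Your proposal is correct and follows essentially the same route as the paper: both arguments apply Lemma~\ref{l1} to the pair $(A^r,B^r)$ with constants $(s^r,t^r)$ and exponent $1/r$, then pass to eigenvalues and raise to the $r$-th power, and both obtain \eqref{15} by substituting $(A^q,B^q)$ and $r=p/q$ into \eqref{14}. Your explicit L\"owner--Heinz remark that the scalar sandwich $s^\nu A^\nu \leq B^\nu \leq t^\nu A^\nu$ persists for $0<\nu<1$ is a welcome addition, since the paper invokes the sandwich at the exponent $q$, which need not be $\geq 1$, without comment.
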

\begin{proof}
First note that the condition $ s A \preceq_{ols} B \preceq_{ols} t A$, is equivalent to the condition $ s^\nu A^\nu \leq B^\nu \leq t^\nu A^\nu$ for every $\nu \geq 1$. In particular, we have $ sA\leq B \leq tA$ for $\nu = 1$. Also, for $r \geq 1$ we have $0 < \frac{1}{r} \leq 1 $ and by \eqref{13}
\begin{align}\label{e1}
A^{\frac{1}{r}} \sharp_\alpha B^{\frac{1}{r}}\leq (\max \lbrace S(s), S(t)\rbrace)^{\frac{1}{r}}  ( A \sharp_\alpha B )^{\frac{1}{r}}.
\end{align}
On the other hand, from the condition $s^\nu A^\nu \leq B^\nu \leq t^\nu A^\nu$ for every $\nu \geq 1$ and letting $\nu =r$, we have  
\begin{align*}
s^r A^r \leq B^r \leq t^r A^r.
\end{align*}
Now if we let $X=A^r$, $Y=B^r$, $w=s^r$ and $z=t^r$,
then
\begin{align}\label{c1}
wX  \leq Y \leq zX.
\end{align}
Using \eqref{e1} under the condition \eqref{c1}, we have
\begin{align*}
X^{\frac{1}{r}} \sharp_\alpha Y^{\frac{1}{r}}\leq (\max \lbrace S(w), S(z)\rbrace)^{\frac{1}{r}}  ( X \sharp_\alpha Y )^{\frac{1}{r}},
\end{align*}
and this is the same as 
\begin{align*}
A \sharp_\alpha B \leq (\max \lbrace S(s^r), S(t^r)\rbrace)^{\frac{1}{r}}  ( A^r \sharp_\alpha B^r )^{\frac{1}{r}}.
\end{align*}
Hence
\begin{align*}
  \lambda_k  (A \sharp_\alpha B) \leq (\max \lbrace S(s^r), S(t^r)\rbrace)^{\frac{1}{r}} \lambda_k ( A^r \sharp_\alpha B^r )^{\frac{1}{r}}.
\end{align*}
By taking $r$-th power on both sides and using the Spectral Mapping Theorem, we get the desired inequality \eqref{14}. Note that from the minimax characterization of eigenvalues of a Hermitian matrix \cite{B1} it follows immediately that $A \leq B$ implies $ \lambda_k  (A) \leq  \lambda_k (B)$ for each $k$. Similarly since $p/q \geq 1$, from inequality \eqref{14}
\begin{align} \label{e2}
 \lambda_k (A \sharp_\alpha B)^{\frac{p}{q}} \leq  \max \lbrace S(s^{\frac{p}{q}} ), S(t^{\frac{p}{q}} )\rbrace   \lambda_k ( A^{\frac{p}{q}}  \sharp_\alpha B^{\frac{p}{q}}  ).
\end{align}
Replacing $A$ and $B$ by $A^q$ and $B^q$ in \eqref{e2}, and using the sandwich condition $ s^q A^q \leq B^q \leq t^qA^q$, we have
\begin{align*} 
 \lambda_k (A^q \sharp_\alpha B^q)^{\frac{p}{q}} \leq  \max \lbrace S(s^{p} ), S(t^{p} )\rbrace  \lambda_k ( A^{p}  \sharp_\alpha B^{p} ).
\end{align*}
This completes the proof.
\end{proof}

Note that eigenvalue inequalities immediately imply log-majorization and unitarily invariant norm inequalities.
\begin{corollary}\label{29} 
Let $A$ and $B$ be positive definite matrices such that $ m I \leq A , B \leq MI$ for some scalars $0 < m \leq M$ with $ h = M/ m$, and let $\alpha \in [0 , 1]$. Then
\begin{align} \label{19}
  A^r \sharp_\alpha B^r \leq S(h)^{r} ( A \sharp_\alpha B )^r, \hspace*{1cm} 0< r \leq 1,
\end{align}
and hence
\begin{align} \label{20}
  \lambda_k (A \sharp_\alpha B)^r \leq S(h^r)  \lambda_k (A^r \sharp_\alpha B^r), \hspace*{1cm} r \geq 1,
\end{align}
\begin{align} \label{21}
    \lambda_k(A^q \sharp_\alpha B^q)^{\frac{1}{q}} \leq S(h^p)^{\frac{1}{p}} \lambda_k( A^p \sharp_\alpha B^p)^{\frac{1}{p}}, \hspace*{1cm} 0 < q \leq p.
\end{align}
 where $S(t)$ is the Specht's ratio defined as \eqref{1} and $k =1,2,\ldots, n$.
\end{corollary}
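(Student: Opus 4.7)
The plan is to reduce Corollary~\ref{29} to the two preceding lemmas by showing that the spectral sandwich $mI \leq A, B \leq MI$ implies both the matrix sandwich $\tfrac{1}{h} A \leq B \leq h A$ needed for Lemma~\ref{l1} and the stronger Olson-order sandwich $\tfrac{1}{h} A \preceq_{ols} B \preceq_{ols} h A$ needed for Lemma~\ref{l2}. Once this reduction is carried out, the three asserted inequalities become direct specializations, provided one observes the symmetry $S(h^{-1}) = S(h)$ recorded after equation~\eqref{1}, which collapses the $\max\{S(\cdot), S(\cdot)\}$ terms into a single $S(h^r)$ or $S(h^p)$.

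First I would establish the matrix sandwich. From $B \geq mI = \tfrac{m}{M}(MI) \geq \tfrac{1}{h} A$ and $B \leq MI = \tfrac{M}{m}(mI) \leq h A$, we immediately obtain $\tfrac{1}{h} A \leq B \leq h A$. Applying Lemma~\ref{l1} with $s = 1/h$ and $t = h$, and using $\max\{S(1/h), S(h)\} = S(h)$, yields~\eqref{19}.

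Next I would upgrade this to the Olson order. The key observation is that under a scalar sandwich, passing to arbitrary positive powers is straightforward: for any $\nu \geq 1$, the bound $A \leq MI$ gives $A^\nu \leq M^\nu I$ and the bound $B \geq mI$ gives $B^\nu \geq m^\nu I$, so
\begin{align*}
\left(\tfrac{1}{h}\right)^\nu A^\nu \;\leq\; \left(\tfrac{m}{M}\right)^\nu M^\nu I \;=\; m^\nu I \;\leq\; B^\nu,
\end{align*}
and symmetrically $B^\nu \leq M^\nu I = h^\nu m^\nu I \leq h^\nu A^\nu$. By the characterization of Olson order recalled in the introduction, this is exactly $\tfrac{1}{h} A \preceq_{ols} B \preceq_{ols} h A$. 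Feeding this into Lemma~\ref{l2} with $s = 1/h$, $t = h$, and again using $S(h^{-r}) = S(h^r)$ and $S(h^{-p}) = S(h^p)$, produces~\eqref{20} and~\eqref{21} respectively.

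There is no genuine obstacle here; the whole argument is a translation of a two-sided operator norm bound into the precise hypotheses of Lemmas~\ref{l1} and~\ref{l2}. The only mildly delicate point is that the Olson-order sandwich does not follow from the matrix sandwich $\tfrac{1}{h} A \leq B \leq h A$ alone (powers need not preserve operator inequalities for non-commuting operators), so one must argue it directly from the scalar bounds $mI \leq A, B \leq MI$, as above, rather than from the intermediate step used for~\eqref{19}.
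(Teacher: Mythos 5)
Your proposal is correct and follows essentially the same route as the paper: derive the sandwich $\tfrac{1}{h}A \leq B \leq hA$ for Lemma~\ref{l1}, then verify $(\tfrac{m}{M})^\nu A^\nu \leq B^\nu \leq (\tfrac{M}{m})^\nu A^\nu$ for all $\nu \geq 1$ directly from the scalar bounds to invoke Lemma~\ref{l2}, and finally collapse the maxima via $S(h)=S(1/h)$. Your explicit caution that the Olson-order sandwich must be argued from the scalar bounds rather than from the operator sandwich is a point the paper leaves implicit, but the substance of the argument is the same.
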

\begin{proof}
 Since $m I \leq A , B \leq MI$ implies $ \frac{m}{M} A \leq  B \leq \frac{M}{m} A $, the inequality \eqref{19} is obtained by letting $s = m/M$, $ t = M/m$ in Lemma \ref{l1}. Also from $m I \leq A , B \leq MI$, we have $m^\nu I \leq A^\nu , B^\nu \leq M^\nu I$  for every $\nu \geq 1$, and so 
 \begin{align}\label{c2}
 (\frac{m}{M})^\nu  A^\nu \leq B^\nu \leq (\frac{M}{m})^\nu A^\nu.
 \end{align}
 Using Lemma \ref{l2} under the condition \eqref{c2}, we reach inequalities \eqref{20} and \eqref{21}. 
 Note that $S(h) = S(\frac{1}{h})$.
\end{proof}

\begin{remark}
We remark that the matrix inequality \eqref{19} is more stronger than corresponding norm inequality obtained by Seo in \cite[Corollary 3.2]{S}. Also inequality \eqref{21} is presented in \cite[Lemma 3.1]{S}.
\end{remark}

In the sequel we show a reverse of the Golden-Thompson type inequality \eqref{17}, which is our main result.

\begin{theorem} \label{t5}
Let $H$ and $K$ be Hermitian matrices such that $ e^s e^H \preceq_{ols} e^K \preceq_{ols} e^t e^H$ for
some scalars $s \leq t$, and let $\alpha \in [0 , 1]$. Then for all $p>0$,
\begin{align*} \label{}
  \lambda_k (e^{(1-\alpha)H + \alpha K} )  \leq  (\max \lbrace S(e^{sp}), S(e^{tp})\rbrace)^{\frac{1}{p}} \lambda_k (e^{pH} \sharp_\alpha e^{pK})^{\frac{1}{p}},
\end{align*}
where $S(t)$ is the so called Specht's ratio defined as \eqref{1} and $k =1,2,\ldots, n$.
\end{theorem}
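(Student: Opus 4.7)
The plan is to reduce the statement to Lemma \ref{l2} by choosing $A=e^H$ and $B=e^K$, and then to take a limit $q\to 0^+$ on the left-hand side using the Lie--Trotter type formula for the weighted geometric mean.

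First I would set $A:=e^H$ and $B:=e^K$, which are positive definite because $H,K$ are Hermitian. The hypothesis $e^s e^H \preceq_{ols} e^K \preceq_{ols} e^t e^H$ is then exactly the sandwich condition $e^s A \preceq_{ols} B \preceq_{ols} e^t A$ with $0<e^s\le e^t$ required by Lemma \ref{l2}. Applying inequality \eqref{15} of that lemma with the constants $(s,t)$ replaced by $(e^s,e^t)$ yields, for every $0<q\le p$,
\begin{align*}
\lambda_k\bigl(e^{qH}\sharp_\alpha e^{qK}\bigr)^{1/q}
\le \bigl(\max\{S(e^{sp}),S(e^{tp})\}\bigr)^{1/p}\lambda_k\bigl(e^{pH}\sharp_\alpha e^{pK}\bigr)^{1/p}.
\end{align*}

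Next I would let $q\to 0^+$ on the left-hand side, keeping $p$ fixed. The classical Lie--Trotter formula for the weighted geometric mean states that for positive definite $A,B$,
\begin{align*}
\lim_{q\to 0^+}\bigl(A^q\sharp_\alpha B^q\bigr)^{1/q}=\exp\bigl((1-\alpha)\log A+\alpha\log B\bigr).
\end{align*}
With $A=e^H$, $B=e^K$, this becomes $\lim_{q\to 0^+}(e^{qH}\sharp_\alpha e^{qK})^{1/q}=e^{(1-\alpha)H+\alpha K}$. Since eigenvalues are continuous functions of the matrix entries (with fixed ordering $\lambda_1\ge\cdots\ge\lambda_n$), the limit passes through $\lambda_k$, yielding
\begin{align*}
\lim_{q\to 0^+}\lambda_k\bigl(e^{qH}\sharp_\alpha e^{qK}\bigr)^{1/q}=\lambda_k\bigl(e^{(1-\alpha)H+\alpha K}\bigr).
\end{align*}
Since the right-hand side of the preceding inequality does not depend on $q$, passing to the limit gives the claimed bound.

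The only delicate point is the justification of the Lie--Trotter limit and the interchange of $\lim_{q\to 0^+}$ with the eigenvalue functional; both are standard and follow from continuity of the continuous functional calculus on the open cone of positive definite matrices. No further obstacle arises, because the main analytic content has already been absorbed into Lemma \ref{l2}, so once the substitution $A=e^H$, $B=e^K$ is made, the theorem is essentially a limiting case $q\to 0^+$ of inequality \eqref{15}.
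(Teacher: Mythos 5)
Your proposal is correct and follows essentially the same route as the paper: substitute $A=e^{H}$, $B=e^{K}$ into inequality \eqref{15} of Lemma \ref{l2} (with the scalars $e^{s}\le e^{t}$), then let $q\downarrow 0$ using the Lie--Trotter type limit $e^{(1-\alpha)H+\alpha K}=\lim_{q\to 0}(e^{qH}\sharp_\alpha e^{qK})^{1/q}$, which the paper cites from Hiai--Petz. No gaps.
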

\begin{proof}
Replacing $A$ and $B$ by $e^H$ and $e^K$ in the inequality \eqref{15} of Lemma \ref{l2}, we
can write
\begin{align*}
 \lambda_k (e^{qH} \sharp_\alpha e^{qK})^{\frac{1}{q}} \leq  (\max \lbrace S(e^{sp}), S(e^{tp})\rbrace)^{\frac{1}{p}} \lambda_k ( e^{pH} \sharp_\alpha e^{pK})^{\frac{1}{p}}, \hspace*{0.8cm} 0 < q \leq p.
\end{align*}
 By \cite[Lemma 3.3]{HP}, we have
 \begin{align*}
 e^{(1-\alpha)H + \alpha K} = \lim_{q\rightarrow 0}  (e^{qH} \sharp_\alpha e^{qK})^{\frac{1}{q}},
\end{align*}
and hence it follows that for each $p > 0$, 
\begin{align*} \label{}
  \lambda_k (e^{(1-\alpha)H + \alpha K} )  \leq (\max \lbrace S(e^{sp}), S(e^{tp})\rbrace)^{\frac{1}{p}}  \lambda_k (e^{pH} \sharp_\alpha e^{pK})^{\frac{1}{p}}.
\end{align*} 
\end{proof}

\begin{corollary} \label{}
Let $H$ and $K$ be Hermitian matrices such that $ e^s e^H \preceq_{ols} e^K \preceq_{ols} e^t e^H$ for
some scalars $s \leq t$, and let $\alpha \in [0 , 1]$. Then for every unitarily invariant norm $\| \cdot \|_u$ and all $p>0$, 
\begin{align} \label{9}
\| e^{(1-\alpha)H + \alpha K} \|_u  \leq (\max \lbrace S(e^{sp}), S(e^{tp})\rbrace)^{\frac{1}{p}}  \|(e^{pH} \sharp_\alpha e^{pK})^{\frac{1}{p}}\|_u,  
\end{align}
and the right-hand side of \eqref{9} converges to the left-hand side as $p \downarrow 0$. In particular,
\begin{align*} \label{}
\| e^{H + K} \|_u  \leq \max \lbrace S(e^{2s}), S(e^{2t})\rbrace  \|(e^{2H} \sharp e^{2K})\|_u. 
\end{align*}
\end{corollary}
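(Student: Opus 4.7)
For the norm inequality \eqref{9}, the plan is to lift the eigenvalue bound of Theorem~\ref{t5} to a unitarily invariant norm bound. For positive definite matrices the eigenvalues are the singular values, and every unitarily invariant norm equals a symmetric gauge function applied to the singular values; by monotonicity and positive homogeneity of that gauge, a pointwise bound $\lambda_k(A) \le c\,\lambda_k(B)$ for every $k$ (with $A,B$ positive definite and $c \ge 0$) yields $\|A\|_u \le c\,\|B\|_u$. I would apply this observation to the conclusion of Theorem~\ref{t5} with $A = e^{(1-\alpha)H+\alpha K}$, $B = (e^{pH}\sharp_\alpha e^{pK})^{1/p}$ and $c = (\max\{S(e^{sp}),S(e^{tp})\})^{1/p}$ to obtain \eqref{9} at once.

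For the convergence of the right-hand side of \eqref{9} to the left-hand side as $p\downarrow 0$, I would treat the scalar and the matrix factor separately. The introduction already records $\lim_{p\to 0}S(t^p)^{1/p}=1$ for each $t>0$; taking $t=e^s$ and $t=e^t$ shows $(\max\{S(e^{sp}),S(e^{tp})\})^{1/p}\to 1$. For the matrix factor, the Hiai--Petz limit $e^{(1-\alpha)H+\alpha K}=\lim_{q\to 0}(e^{qH}\sharp_\alpha e^{qK})^{1/q}$ from \cite[Lemma 3.3]{HP} (already invoked in the proof of Theorem~\ref{t5}) holds in every norm on the finite-dimensional matrix algebra, and in particular in the continuous norm $\|\cdot\|_u$, so $\|(e^{pH}\sharp_\alpha e^{pK})^{1/p}\|_u \to \|e^{(1-\alpha)H+\alpha K}\|_u$. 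Multiplying the two limits gives the asserted convergence.

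For the ``In particular'' clause I would specialize Theorem~\ref{t5} to $\alpha = \tfrac{1}{2}$ and $p = 2$, producing
\[
\lambda_k\bigl(e^{(H+K)/2}\bigr) \le (\max\{S(e^{2s}),S(e^{2t})\})^{1/2}\,\lambda_k\bigl(e^{2H}\sharp e^{2K}\bigr)^{1/2}.
\]
Both sides being non-negative, squaring preserves the inequality; combined with $\lambda_k(X)^2 = \lambda_k(X^2)$ for positive $X$ and the identity $(e^{(H+K)/2})^2 = e^{H+K}$ (valid because $(H+K)/2$ commutes with itself), this yields $\lambda_k(e^{H+K}) \le \max\{S(e^{2s}),S(e^{2t})\}\,\lambda_k(e^{2H}\sharp e^{2K})$, and the same gauge-function lifting produces the stated norm inequality. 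I do not foresee a substantial obstacle; the only fine points are the direct lifting of a pointwise eigenvalue bound to $\|\cdot\|_u$ (which keeps the scalar constant outside the norm without the slack one would incur via a mere weak log-majorization argument), and the legitimacy of the squaring step, which rests on positivity of $e^{(H+K)/2}$.
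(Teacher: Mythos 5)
Your proposal is correct and follows essentially the route the paper intends: the corollary is stated as an immediate consequence of Theorem~\ref{t5} via the standard fact (noted in the paper after Lemma~\ref{l2}) that termwise eigenvalue bounds lift to unitarily invariant norm bounds, with the convergence coming from $\lim_{p\to 0}S(t^p)^{1/p}=1$ and the Hiai--Petz limit. Your care in squaring at the eigenvalue level before lifting (rather than trying to square the norm inequality \eqref{9} directly) is exactly the right way to obtain the ``in particular'' case with $\alpha=\tfrac12$, $p=2$.
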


\begin{corollary}\cite[Theorem 3.3-Theorem 3.4]{S} \label{c3}
Let $H$ and $K$ be Hermitian matrices such that $ mI \leq H, K \leq MI$ for
some scalars $m \leq M$, and let $\alpha \in [0 , 1]$. Then for all $p>0$, 
\begin{align*} \label{}
  \lambda_k (e^{(1-\alpha)H + \alpha K} )  \leq S(e^{(M-m)p})^{\frac{1}{p}}  \lambda_k (e^{pH} \sharp_\alpha e^{pK})^{\frac{1}{p}}  ,\hspace*{1cm} k = 1,2,\ldots,n.
\end{align*}
So,  for every unitarily invariant norm $\| \cdot \|_u$ 
\begin{align*} \label{}
  \| e^{(1-\alpha)H + \alpha K} \|_u  \leq  S(e^{(M-m)p})^{\frac{1}{p}} \| (e^{pH} \sharp_\alpha e^{pK})^{\frac{1}{p}}\|_u,
\end{align*}
and the right-hand side of these inequalities converges to the left-hand side as $p \downarrow 0$.
\end{corollary}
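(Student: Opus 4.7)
The plan is to deduce this corollary from Theorem \ref{t5} by converting the two-sided spectral bound $mI \leq H,K \leq MI$ into an Olson-order sandwich $e^s e^H \preceq_{ols} e^K \preceq_{ols} e^t e^H$ with an explicit choice of $s,t$, and then exploiting the symmetry $S(x^{-1})=S(x)$ of the Specht ratio.

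First I would note that the hypothesis $mI \leq H, K \leq MI$ implies, via the spectral theorem, that for every $r \geq 1$
\begin{align*}
e^{rm}I \leq e^{rH} \leq e^{rM}I, \qquad e^{rm}I \leq e^{rK} \leq e^{rM}I.
\end{align*}
Taking $s = -(M-m)$ and $t = M-m$, the scalars $e^s e^H$ and $e^t e^H$ (scalar multiples of $e^H$) commute with everything, so that $(e^{s}e^{H})^{r} = e^{rs}e^{rH}$ and $(e^{t}e^{H})^{r} = e^{rt}e^{rH}$. Using the two displayed bounds above, I would check
\begin{align*}
e^{rs}e^{rH} \leq e^{r(m-M)} e^{rM}I = e^{rm}I \leq e^{rK}, \qquad e^{rK} \leq e^{rM}I = e^{r(M-m)}e^{rm}I \leq e^{rt}e^{rH},
\end{align*}
which is precisely the Olson-order condition $e^{s}e^{H} \preceq_{ols} e^{K} \preceq_{ols} e^{t}e^{H}$.

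Next, I would apply Theorem \ref{t5} with these specific $s = -(M-m)$ and $t = M-m$. Since $S(x^{-1}) = S(x)$ for $x>0$, one has $S(e^{sp}) = S(e^{-(M-m)p}) = S(e^{(M-m)p}) = S(e^{tp})$, so the maximum collapses to $S(e^{(M-m)p})$ and the eigenvalue inequality of the corollary drops out. The unitarily invariant norm inequality then follows because the bound $\lambda_k(X) \leq C\,\lambda_k(Y)$ for all $k$, applied to the positive definite matrices $X = e^{(1-\alpha)H+\alpha K}$ and $Y = (e^{pH}\sharp_\alpha e^{pK})^{1/p}$, yields weak log-majorization and hence $\|X\|_u \leq C\|Y\|_u$ by Ky Fan's dominance principle.

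Finally, for the limiting statement as $p \downarrow 0$, I would combine the two standard facts already recorded in the paper: $\lim_{p\to 0} S(e^{(M-m)p})^{1/p} = 1$, and the Hiai--Petz limit formula $\lim_{p\to 0}(e^{pH}\sharp_\alpha e^{pK})^{1/p} = e^{(1-\alpha)H+\alpha K}$ (cited from \cite[Lemma 3.3]{HP}). Together these force the right-hand side of the norm inequality to converge to $\|e^{(1-\alpha)H+\alpha K}\|_u$. No step here looks genuinely hard; the only place that needs care is verifying the Olson-order sandwich, since one must resist the temptation to use a nonexistent operator-monotonicity of $\exp$ and instead reduce to the scalar bounds on $e^{rH},e^{rK}$ that the spectral theorem actually gives.
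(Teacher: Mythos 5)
Your proposal is correct and follows essentially the same route as the paper: both derive the Olson-order sandwich $e^{m-M}e^{H}\preceq_{ols}e^{K}\preceq_{ols}e^{M-m}e^{H}$ from the scalar bounds $e^{rm}I\leq e^{rH},e^{rK}\leq e^{rM}I$, then apply Theorem \ref{t5} together with the symmetry $S(t)=S(1/t)$. Your write-up merely spells out the verification of the Olson condition and the passage to unitarily invariant norms in more detail than the paper does.
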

\begin{proof}
From $ mI \leq H, K \leq MI$, we have $ e^{\nu m} \leq e^{\nu H}, e^{\nu K}  \leq e^{\nu M}$ for every $\nu \geq 1$ and so we can derive $e^{m-M} e^H \preceq_{ols} e^K \preceq_{ols} e^{M-m} e^H$. Now the assertion is obtained by applying Theorem \ref{t5} and the fact that for every $t>0$, $S(t)=S(\frac{1}{t})$.
 \end{proof}
   
\section{reverse inequalities via kantorovich constant}
A well-known matrix version of the Kantorovich inequality \cite{MP} asserts that if $A$ and $U$ are two matrices such that $0<mI \leq A \leq MI$ and $UU^* = I$, then 
\begin{align}\label{4}
U A^{-1} U^* \leq \frac{(m+M)^2}{4mM}  (UAU^*)^{-1}.
\end{align}
Let $w>0$. The generalized Kantorovich constant $K(w,\alpha)$ is defined by
\begin{align}\label{5}
K(w, \alpha):= \dfrac{w^\alpha - w}{(\alpha-1)(w-1)} \big( \dfrac{\alpha - 1}{\alpha}\dfrac{w^\alpha-1}{w^\alpha - w} \big)^\alpha,
\end{align}
for any real number $\alpha \in \Bbb{R}$ \cite{F}. In fact, $K(\frac{M}{m}, -1) = K(\frac{M}{m}, 2) $ is the constant
occurring in \eqref{4}.

Now as a result of following statement, we have another reverse Golden-Thompson type  inequality which refines  corresponding inequality in \cite{S}. 
 
\begin{prop} \label{6}
\cite[Theorem 3]{Gk}
 Let $H$ and $K$ be Hermitian matrices such that $ e^s e^H \preceq_{ols} e^K \preceq_{ols} e^t e^H$ for some scalars $ s \leq t$, and let $\alpha \in [0 , 1]$. Then 
 \begin{align}\label{8}
\lambda_k (e^{(1-\alpha)H + \alpha K}) \leq K(e^{p(t-s)},\alpha)^{-\frac{1}{p}}  \lambda_k (e^{pH} \sharp_\alpha e^{pK})^{\frac{1}{p}}, \hspace*{1 cm} p>0,
\end{align}
where $K(w, \alpha)$ is the generalized Kantorovich constant defined as \eqref{5}. 
\end{prop}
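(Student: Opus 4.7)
The plan is to mirror the proof of Theorem \ref{t5}, replacing the Specht-ratio bound from Lemmas \ref{l1} and \ref{l2} with a Kantorovich analog. First I would establish the Kantorovich counterpart of Lemma \ref{l1}: under $sA \leq B \leq tA$ with $0 < s \leq t$ and $\alpha \in [0,1]$,
\begin{align*}
A^r \sharp_\alpha B^r \leq K(t/s, \alpha)^{-r} (A \sharp_\alpha B)^r, \qquad 0 < r \leq 1.
\end{align*}
This should come from the Kantorovich version of the operator-mean inequality used in Lemma \ref{l1}, namely $f(A) \sharp_\alpha f(B) \leq K(t/s,\alpha)^{-1} f(A \sharp_\alpha B)$ for any operator monotone $f$ on $[0,\infty)$, specialized to $f(x) = x^r$.

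Next I would repeat the structural argument of Lemma \ref{l2} with this new ingredient. Feeding the sandwich condition $s^r A^r \leq B^r \leq t^r A^r$ (furnished by the Olson hypothesis for $r \geq 1$) into the inequality above applied with exponent $1/r$ to the pair $(A^r, B^r)$, then taking $\lambda_k$ and $r$-th powers, yields
\begin{align*}
\lambda_k(A \sharp_\alpha B)^r \leq K((t/s)^r, \alpha)^{-1} \lambda_k(A^r \sharp_\alpha B^r), \qquad r \geq 1.
\end{align*}
Replacing $(A,B)$ by $(A^q, B^q)$ and setting the exponent $p/q \geq 1$ converts this into the Kantorovich analog of \eqref{15}:
\begin{align*}
\lambda_k(A^q \sharp_\alpha B^q)^{1/q} \leq K((t/s)^p, \alpha)^{-1/p} \lambda_k(A^p \sharp_\alpha B^p)^{1/p}, \qquad 0 < q \leq p.
\end{align*}

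The final step mimics Theorem \ref{t5}: set $A = e^H$ and $B = e^K$, so that the Olson hypothesis $e^s e^H \preceq_{ols} e^K \preceq_{ols} e^t e^H$ supplies ratio $e^t/e^s = e^{t-s}$ and the Kantorovich constant becomes $K(e^{p(t-s)}, \alpha)$. Letting $q \downarrow 0$ and invoking the Hiai--Petz limit $e^{(1-\alpha)H + \alpha K} = \lim_{q \to 0}(e^{qH} \sharp_\alpha e^{qK})^{1/q}$ from \cite[Lemma 3.3]{HP}, together with continuity of $\lambda_k$, transfers the bound to $\lambda_k(e^{(1-\alpha)H + \alpha K})$ and closes the proof. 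The main obstacle is Step 1, producing the operator-monotone-function Kantorovich inequality $f(A)\sharp_\alpha f(B) \leq K(t/s,\alpha)^{-1} f(A\sharp_\alpha B)$; once this Kantorovich refinement of the result used in Lemma \ref{l1} is in hand, the remainder is a routine transcription of the arguments in Lemma \ref{l2} and Theorem \ref{t5}, with $K(\cdot,\alpha)^{-1}$ playing the role of $\max\{S(\cdot),S(\cdot)\}$ throughout.
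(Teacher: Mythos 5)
First, note that the paper does not actually prove Proposition \ref{6}: it is imported verbatim as \cite[Theorem 3]{Gk}, so there is no in-paper proof to compare against. What you have written is a reconstruction of that cited proof, and your architecture is exactly the right one: it is the Kantorovich-constant transcription of the paper's own chain Lemma \ref{l1} $\to$ Lemma \ref{l2} $\to$ Theorem \ref{t5}, with the Olson hypothesis supplying the sandwich condition $s^rA^r\le B^r\le t^rA^r$ at each rescaling, and the Hiai--Petz limit closing the argument. Your Steps 2--4 are a correct routine transcription of the proof of Lemma \ref{l2} and Theorem \ref{t5}.

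The one genuine gap is in Step 1, and it is load-bearing. You posit the ingredient in the form $f(A)\sharp_\alpha f(B)\le K(t/s,\alpha)^{-1}f(A\sharp_\alpha B)$, with the constant \emph{outside} $f$; for $f(x)=x^r$ this yields only $A^r\sharp_\alpha B^r\le K(t/s,\alpha)^{-1}(A\sharp_\alpha B)^r$, not the $K(t/s,\alpha)^{-r}$ that you then use. The distinction is not cosmetic: if you propagate the $K^{-1}$ version through your Steps 2--3, the constant arriving in the analogue of \eqref{15} is $K(h^p,\alpha)^{-1/q}$ rather than $K(h^p,\alpha)^{-1/p}$, and since $K(h^p,\alpha)<1$ this blows up as $q\downarrow 0$, so the limit step fails. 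What you need is the exact analogue of what Lemma \ref{l1} quotes from \cite{GK}, namely the constant \emph{inside} the function,
\begin{align*}
f(A)\sharp_\alpha f(B)\ \le\ f\bigl(K(t/s,\alpha)^{-1}(A\sharp_\alpha B)\bigr),
\end{align*}
which for $f(x)=x^r$, $0<r\le 1$, gives the claimed $K(t/s,\alpha)^{-r}(A\sharp_\alpha B)^r$. This stronger form does hold and is proved by the same chain as in \cite{GK}: $f(A)\sharp_\alpha f(B)\le f(A)\nabla_\alpha f(B)\le f(A\nabla_\alpha B)$ by operator concavity, followed by the ratio-type reverse AM--GM inequality $A\nabla_\alpha B\le K(t/s,\alpha)^{-1}(A\sharp_\alpha B)$ under $sA\le B\le tA$ (see \cite{F}, \cite{S}) and operator monotonicity of $f$. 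With Step 1 stated in this corrected form, your proposal closes correctly and recovers \eqref{8} with the constant $K(e^{p(t-s)},\alpha)^{-1/p}$.
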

\begin{theorem} \label{t2}
 Let $H$ and $K$ be Hermitian matrices such that $ m I \leq K , H \leq M I$ for some scalars $m \leq M$ and let $\alpha \in [0 , 1]$. Then for every $p > 0$
 \begin{align*}
\lambda_k (e^{(1-\alpha)H + \alpha K}) \leq K(e^{2p(M-m)},\alpha)^{-\frac{1}{p}}  \lambda_k (e^{pH} \sharp_\alpha e^{pK})^{\frac{1}{p}},\hspace*{1cm} k = 1,2,\ldots,n,
\end{align*}
and the right-hand side of this inequality converges to the left-hand side as $p \downarrow 0$. In particular, 
 \begin{align*}
\lambda_k (e^{H + K})  \leq \dfrac{e^{2M} + e^{2m}}{2e^{M} e^{m} }  \lambda_k ( e^{2H} \sharp e^{2K}),\hspace*{1cm} k = 1,2,\ldots,n.
\end{align*}
\end{theorem}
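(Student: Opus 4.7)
The plan is to obtain this as a direct corollary of Proposition \ref{6} by converting the sandwich condition $mI \leq H, K \leq MI$ into an Olson-order sandwich of the form required by that proposition. First I would repeat the reduction already used at the start of the proof of Corollary \ref{c3}: from $mI \leq H \leq MI$ and $mI \leq K \leq MI$, since the scalar matrices commute with $H$ and $K$ we get $e^{\nu m} I \leq e^{\nu H}, e^{\nu K} \leq e^{\nu M} I$ for every $\nu \geq 1$. Rearranging yields $e^{\nu(m-M)} e^{\nu H} \leq e^{\nu K} \leq e^{\nu(M-m)} e^{\nu H}$ for all $\nu \geq 1$, which is exactly the statement $e^{m-M} e^H \preceq_{ols} e^K \preceq_{ols} e^{M-m} e^H$ by the characterization of the Olson order recalled in the introduction.

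Next I would feed this into Proposition \ref{6} with the choice $s = m-M$ and $t = M-m$, so that $t - s = 2(M-m)$. The inequality \eqref{8} then reads
\begin{align*}
\lambda_k (e^{(1-\alpha)H + \alpha K}) \leq K(e^{2p(M-m)},\alpha)^{-\frac{1}{p}} \lambda_k (e^{pH} \sharp_\alpha e^{pK})^{\frac{1}{p}},
\end{align*}
which is the main statement. For the convergence claim as $p \downarrow 0$ I would combine two ingredients: the Hiai--Petz formula $e^{(1-\alpha)H + \alpha K} = \lim_{p \to 0}(e^{pH} \sharp_\alpha e^{pK})^{1/p}$ used already in the proof of Theorem \ref{t5}, and the standard asymptotic $\lim_{p \to 0} K(e^{cp}, \alpha)^{-1/p} = 1$ for the generalized Kantorovich constant, which follows by inspecting \eqref{5} and noting that $K(w,\alpha) = 1 + O((w-1)^2)$ near $w = 1$.

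For the particular case I would apply the main inequality with $p = 2$ and $\alpha = 1/2$, obtaining
\begin{align*}
\lambda_k(e^{(H+K)/2}) \leq K(e^{4(M-m)}, 1/2)^{-1/2}\, \lambda_k(e^{2H} \sharp e^{2K})^{1/2},
\end{align*}
then square both sides and invoke the spectral mapping theorem to replace $\lambda_k(e^{(H+K)/2})^2$ by $\lambda_k(e^{H+K})$. The remaining task is a direct scalar calculation: substituting $\alpha = 1/2$ into \eqref{5} with $w = e^{4(M-m)}$ gives $K(w, 1/2) = 2 w^{1/4}/(w^{1/2}+1)$, hence $K(e^{4(M-m)}, 1/2)^{-1} = (e^{2(M-m)}+1)/(2 e^{M-m}) = (e^{2M}+e^{2m})/(2 e^M e^m)$, matching the desired constant.

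The only mildly delicate step is the convergence assertion, since it requires the non-obvious limit $K(e^{cp}, \alpha)^{-1/p} \to 1$; everything else is routine. In particular the reduction from $mI \leq H, K \leq MI$ to the Olson-order sandwich is purely algebraic and mirrors what was done for Corollary \ref{c3}, so no new idea is needed beyond recognizing that the Kantorovich-type Proposition \ref{6} of \cite{Gk} is the correct tool to invoke, with the doubling $t - s = 2(M-m)$ being the source of the factor $2$ inside the exponential in the final constant.
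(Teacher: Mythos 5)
Your proposal is correct and follows essentially the same route as the paper: convert $mI\leq H,K\leq MI$ into the Olson-order sandwich $e^{m-M}e^{H}\preceq_{ols}e^{K}\preceq_{ols}e^{M-m}e^{H}$, apply Proposition \ref{6} with $s=m-M$, $t=M-m$, and specialize to $p=2$, $\alpha=\tfrac{1}{2}$ for the final display. The only cosmetic difference is in justifying $K(e^{2p(M-m)},\alpha)^{-1/p}\to 1$: the paper squeezes $K(w^{p},\alpha)$ between $2w^{p/4}/(w^{p/2}+1)$ and $1$ and computes the limit explicitly, whereas you invoke the second-order behaviour $K(w,\alpha)=1+O((w-1)^{2})$ near $w=1$; both are valid.
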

 \begin{proof}
 Since $ m I \leq K , H \leq M I$ implies $ e^{m-M} e^H \preceq_{ols} e^K \preceq_{ols} e^{M-m} e^H$, desired inequalities are obtained by letting $s = m-M$ and $t = M-m$ in Proposition \ref{6}. For the convergence, we know that  $\dfrac{2w^\frac{1}{4}}{w^\frac{1}{2}+1} \leq K(w,\alpha)\leq 1$, for every $\alpha \in [0, 1]$. So, for every $p>0$ 
\begin{align*}
1 \leq K(w^p,\alpha)^{-\frac{1}{p}}\leq (\dfrac{2w^\frac{p}{4}}{w^\frac{p}{2}+1})^{-\frac{1}{p}} .
\end{align*}
A simple calculation shows that 
\begin{align*}
\lim_{p\rightarrow 0} -\frac{1}{p} Ln (\dfrac{2w^\frac{p}{4}}{w^\frac{p}{2}+1}) = \lim_{p\rightarrow 0} \dfrac{ Ln (w) (w^\frac{P}{2}-1)}{4 w^\frac{p}{2}+1}=0,
\end{align*}
and hence $\lim_{p\rightarrow 0} (\dfrac{2w^\frac{p}{4}}{w^\frac{p}{2}+1})^{-\frac{1}{p}} = 1 $.  Now by using the sandwich condition and letting $w=e^{2(M-m)}$, we have $\lim_{p\rightarrow 0} K(e^{2p(M-m)},\alpha)^{-\frac{1}{p}} = 1$.
\end{proof}

\begin{remark}
Under the assumptions of Theorem \ref{t2}, Seo in \cite[Theorem 4.2]{S} proved that 
\begin{align*}\label{}
  \| e^{(1-\alpha)H + \alpha K}\|_u  \leq K(e^{(M-m)}, p)^{-\frac{\alpha}{p}} K(e^{2p(M-m)},\alpha)^{-\frac{1}{p}} \| ( e^{pH} \sharp_\alpha e^{pK})^\frac{1}{p} \|_u, \hspace*{1cm} 0 < p \leq 1,
\end{align*}
and
\begin{align*}\label{}
  \| e^{(1-\alpha)H + \alpha K}\|_u  \leq K(e^{2p(M-m)},\alpha)^{-\frac{1}{p}} \| ( e^{pH} \sharp_\alpha e^{pK})^{\frac{1}{p}} \|_u, \hspace*{1cm} p\geq 1.
\end{align*}
But the inequality \eqref{8} shows that the sharper constant for all $p>0$ is $K(e^{2p(M-m)},\alpha)^{-\frac{1}{p}}$. Since   for $0 < p \leq 1$, $K(e^{(M-m)}, p)^{-\frac{\alpha}{p}} \geq 1$ and hence
\begin{align*}
 K(e^{2p(M-m)},\alpha)^{-\frac{1}{p}} \leq K(e^{(M-m)}, p)^{-\frac{\alpha}{p}} K(e^{2p(M-m)},\alpha)^{-\frac{1}{p}}.
\end{align*}

\end{remark}

\section{some related results} 
 
 It has been shown \cite{GK} that if $f :  [0,\infty) \longrightarrow [0,\infty)$ is operator monotone function and $0 <mI \leq A \leq B \leq MI \leq I$ with $h=\frac{M}{m}$, then for all $\alpha \in [0 , 1]$
\begin{align} \label{2}
f(A)\sharp_\alpha f(B)  \leq \exp \big( \alpha ( 1- \alpha) (1-\frac{1}{h})^2 \big) f( A \sharp_\alpha B),
\end{align}
This new ratio has been introduced  by Furuichi and Minculete in \cite{FM}, which is different from Specht ratio and Kantorovich constant. By applying \eqref{2} for $f(t)= t^r$, $0< r \leq 1$ we have the following results similar to Lemma \ref{l1} and Lemma \ref{l2}. 

\begin{lemma}\label{l3}
Let $A$ and $B$ be positive definite matrices such that $0 <mI \leq A \leq B \leq MI \leq I$ with $ h = M/ m$, and let $\alpha \in [0 , 1]$. Then
\begin{align*} \label{}
  A^r \sharp_\alpha B^r \leq   \exp \big( r \alpha ( 1- \alpha) (1-\frac{1}{h})^2 \big) ( A \sharp_\alpha B )^r, \hspace*{1cm} 0< r \leq 1,
\end{align*}
\end{lemma}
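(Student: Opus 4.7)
The plan is to obtain Lemma \ref{l3} as an immediate corollary of the general multiplicative bound \eqref{2}, in direct parallel with the proof of Lemma \ref{l1}. The sandwich condition $0<mI\leq A\leq B\leq MI\leq I$ is exactly the hypothesis required by \eqref{2}, so no preparatory reduction is needed and one can substitute an operator monotone $f$ directly.

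The natural choice is $f(t)=t^{r}$ with $r\in(0,1]$. By the L\"owner--Heinz inequality this function is operator monotone on $[0,\infty)$ and maps $[0,\infty)$ into itself, so it is admissible in \eqref{2}. Substituting this $f$ turns $f(A)$ into $A^{r}$, $f(B)$ into $B^{r}$, and $f(A\sharp_\alpha B)$ into $(A\sharp_\alpha B)^{r}$, and the scalar constant $C:=\exp(\alpha(1-\alpha)(1-\tfrac{1}{h})^{2})$ on the right-hand side propagates through the power $t\mapsto t^{r}$ to yield
\[
A^{r}\sharp_\alpha B^{r} \;\leq\; C^{\,r}\,(A\sharp_\alpha B)^{r},
\]
exactly as in the Specht-ratio case of Lemma \ref{l1}, where $M=\max\{S(s),S(t)\}$ becomes $M^{r}$ in passing to \eqref{13}. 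Rewriting $C^{\,r}=\exp(r\alpha(1-\alpha)(1-\tfrac{1}{h})^{2})$ then gives the asserted inequality.

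No genuine obstacle is anticipated: the argument is essentially a one-line application of \eqref{2}, and it requires nothing beyond operator monotonicity of $t\mapsto t^{r}$ for $r\in(0,1]$ together with the scalar identity $(Cx)^{r}=C^{r}x^{r}$. The only point that merits a sentence of care in the write-up is the bookkeeping of how the constant $C$ propagates through the $r$-th power, which is precisely the mechanism responsible for the factor of $r$ appearing inside the exponential in the final bound.
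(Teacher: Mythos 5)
Your argument is essentially the paper's own: the authors obtain Lemma \ref{l3} in one line by substituting $f(t)=t^{r}$, $0<r\le 1$, into the bound \eqref{2}, exactly as you propose. The one point worth making explicit is that the ``propagation'' of the constant through the $r$-th power requires \eqref{2} in the form $f(A)\sharp_\alpha f(B)\le f\bigl(e^{\alpha(1-\alpha)(1-1/h)^{2}}(A\sharp_\alpha B)\bigr)$, with the scalar \emph{inside} $f$ (this is precisely the form used in the proof of Lemma \ref{l1}, where $M$ becomes $M^{r}$); taken literally as displayed, with the constant outside $f$, \eqref{2} would only yield the weaker conclusion $A^{r}\sharp_\alpha B^{r}\le \exp\bigl(\alpha(1-\alpha)(1-\tfrac{1}{h})^{2}\bigr)(A\sharp_\alpha B)^{r}$, with no factor $r$ in the exponential. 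Since you implicitly invoke the ``inside'' form, your derivation matches the intended proof and yields the stated constant.
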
 
\begin{lemma}
Let $A$ and $B$ be positive definite matrices such that $0 <m I \preceq_{ols} A \preceq_{ols} B \preceq_{ols} MI \preceq_{ols} I$ with $ h = M/ m$, and let $\alpha \in [0 , 1]$. Then for all $k =1,2,\ldots, n$, 
\begin{align*} \label{}
\lambda_k (A \sharp_\alpha B)^r \leq \exp \big( \alpha ( 1- \alpha) (1-\frac{1}{h^r})^2 \big) \lambda_k (A^r \sharp_\alpha B^r), \hspace*{1cm} r \geq 1,
\end{align*}
\begin{align} \label{3}
    \lambda_k(A^q \sharp_\alpha B^q)^{\frac{1}{q}} \leq \exp \big(\frac{1}{p} \alpha ( 1- \alpha) (1-\frac{1}{h^p})^2 \big) \lambda_k( A^p \sharp_\alpha B^p)^{\frac{1}{p}}, \hspace*{1cm} 0 < q \leq p.
\end{align}
\end{lemma}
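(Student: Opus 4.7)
The proof will mirror the structure of Lemma \ref{l2} exactly, but with Lemma \ref{l3} taking the role that Lemma \ref{l1} played there. First I would unpack the Olson order hypothesis: by definition of $\preceq_{ols}$, the chain $0 < mI \preceq_{ols} A \preceq_{ols} B \preceq_{ols} MI \preceq_{ols} I$ is equivalent to
\begin{align*}
 0 < m^\nu I \leq A^\nu \leq B^\nu \leq M^\nu I \leq I, \qquad \nu \geq 1,
\end{align*}
where the last inequality is automatic because $M \leq 1$ forces $M^\nu \leq 1$. In particular, for each fixed $r \geq 1$ the operators $A^r, B^r$ together with the scalars $m^r, M^r$ satisfy the hypotheses of Lemma \ref{l3}.

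Next, I apply Lemma \ref{l3} to $A^r, B^r$ using the admissible exponent $1/r \in (0,1]$ in place of the $r$ appearing there; noting that $M^r/m^r = h^r$, this produces
\begin{align*}
 (A^r)^{1/r} \sharp_\alpha (B^r)^{1/r} \leq \exp\!\left(\tfrac{1}{r}\alpha(1-\alpha)(1-\tfrac{1}{h^r})^2\right)\, ( A^r \sharp_\alpha B^r )^{1/r},
\end{align*}
which simplifies on the left-hand side to $A \sharp_\alpha B$. Passing to the $k$-th largest eigenvalue (Weyl monotonicity: $X \leq Y \Rightarrow \lambda_k(X) \leq \lambda_k(Y)$), raising to the $r$-th power, and using the Spectral Mapping Theorem on the right gives the first asserted inequality
\begin{align*}
 \lambda_k(A \sharp_\alpha B)^r \leq \exp\!\left(\alpha(1-\alpha)(1-\tfrac{1}{h^r})^2\right) \lambda_k(A^r \sharp_\alpha B^r).
\end{align*}

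For \eqref{3} I would then replace $A, B$ by $A^q, B^q$ and take the exponent to be $p/q \geq 1$. The hypotheses still hold because the Olson-order chain is preserved under the passage to the $q$-th power (this is exactly what $\preceq_{ols}$ buys us), and the ratio transforms as $(M^q/m^q)^{p/q} = h^p$. Applying the first inequality in this substituted form yields
\begin{align*}
 \lambda_k(A^q \sharp_\alpha B^q)^{p/q} \leq \exp\!\left(\alpha(1-\alpha)(1-\tfrac{1}{h^p})^2\right)\, \lambda_k(A^p \sharp_\alpha B^p),
\end{align*}
and raising both sides to the $1/p$-th power delivers \eqref{3}.

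I do not expect a serious obstacle here: the argument is entirely parallel to Lemma \ref{l2}, and Lemma \ref{l3} has been stated precisely for the role it plays. The only care point, which is the closest thing to a sticking place, is bookkeeping the constant through the two substitutions (verifying that $(1-1/h^r)^2$ becomes $(1-1/h^p)^2$ after replacing $r$ by $p/q$ and $h$ by $h^q$), and checking that the upper bound $MI \preceq_{ols} I$ survives after raising to the $q$-th power so that Lemma \ref{l3}'s normalization $MI \leq I$ is legitimately in force at every stage.
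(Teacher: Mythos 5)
Your proof is correct and is exactly the argument the paper intends: the paper gives no separate proof of this lemma, merely noting that it follows "similar to Lemma \ref{l2}" by using Lemma \ref{l3} in place of Lemma \ref{l1}, which is precisely the substitution you carry out. Your bookkeeping of the constant (the $1/r$ in the exponent cancelling upon raising to the $r$-th power, and $h \mapsto h^q$, $r \mapsto p/q$ yielding $h^p$) and your use of the Olson order to preserve the chain $m^\nu I \leq A^\nu \leq B^\nu \leq M^\nu I \leq I$ are both accurate.
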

 
\begin{theorem}\label{t3}
Let $H$ and $K$ be Hermitian matrices such that $ e^mI \preceq_{ols} e^H \preceq_{ols} e^K \preceq_{ols} e^M I \preceq_{ols} I$ for
some scalars $m \leq M$, and let $\alpha \in [0 , 1]$. Then for all $p>0$ and $k = 1,2,\ldots,n$
\begin{align*} \label{}
  \lambda_k (e^{(1-\alpha)H + \alpha K} )  \leq \exp \big(\frac{1}{p} \alpha ( 1- \alpha) (1-\frac{1}{e^{p(M-m)}})^2 \big)   \lambda_k (e^{pH} \sharp_\alpha e^{pK})^{\frac{1}{p}}, 
\end{align*}
and so, for every unitarily invariant norm $\| \cdot \|_u$ 
\begin{align*} \label{}
  \| e^{(1-\alpha)H + \alpha K} \|_u  \leq  \exp \big(\frac{1}{p} \alpha ( 1- \alpha) (1-\frac{1}{e^{p(M-m)}})^2 \big) \| (e^{pH} \sharp_\alpha e^{pK})^{\frac{1}{p}}\|_u.
\end{align*}
\end{theorem}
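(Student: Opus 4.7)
The plan is to mimic the proof of Theorem \ref{t5} almost verbatim, but using inequality \eqref{3} from the preceding lemma (the exponential/Furuichi--Minculete-type constant) in place of inequality \eqref{15} (the Specht-ratio constant).

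First I would substitute $A = e^H$ and $B = e^K$ in inequality \eqref{3}. The hypothesis $e^m I \preceq_{ols} e^H \preceq_{ols} e^K \preceq_{ols} e^M I \preceq_{ols} I$ fits the hypothesis of that lemma with the roles of $m$ and $M$ played by $e^m$ and $e^M$, so the relevant ratio becomes $h = e^M/e^m = e^{M-m}$ and therefore $h^p = e^{p(M-m)}$. This gives, for all $0 < q \leq p$ and each $k$,
\begin{equation*}
\lambda_k\bigl((e^{qH} \sharp_\alpha e^{qK})^{1/q}\bigr) \leq \exp\!\Bigl(\tfrac{1}{p}\alpha(1-\alpha)\bigl(1-\tfrac{1}{e^{p(M-m)}}\bigr)^{2}\Bigr)\,\lambda_k\bigl((e^{pH} \sharp_\alpha e^{pK})^{1/p}\bigr).
\end{equation*}

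Next I would let $q \downarrow 0$ on the left-hand side, using the Lie--Trotter-type formula $e^{(1-\alpha)H+\alpha K} = \lim_{q\to 0}(e^{qH}\sharp_\alpha e^{qK})^{1/q}$ from \cite[Lemma 3.3]{HP}, exactly as done in the proof of Theorem \ref{t5}. Since the right-hand side does not depend on $q$ and eigenvalues depend continuously on the matrix entries, the limit passes through $\lambda_k$ and yields the desired eigenvalue inequality.

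For the unitarily invariant norm statement, I would invoke the standard fact, recalled in the introduction, that an eigenvalue-wise inequality of the form $\lambda_k(X) \leq c\,\lambda_k(Y)$ for all $k$ implies weak log-majorization $X \prec_{w\log} cY$, hence $\|X\|_u \leq c\|Y\|_u$ for every unitarily invariant norm, applied with $X = e^{(1-\alpha)H+\alpha K}$, $Y = (e^{pH}\sharp_\alpha e^{pK})^{1/p}$ and the constant from above.

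The only conceivable obstacle is checking that the Olson-order sandwich in the hypothesis does feed correctly into the previous lemma; but since that lemma is itself stated under an Olson-order sandwich, no extra work is required and the proof reduces to the two short substitutions above.
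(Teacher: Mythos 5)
Your proposal is correct and follows exactly the route the paper takes: the authors likewise prove Theorem \ref{t3} by substituting $A=e^{H}$, $B=e^{K}$ with $h=e^{M-m}$ into inequality \eqref{3} and then letting $q\downarrow 0$ via the Lie--Trotter formula, as in the proof of Theorem \ref{t5}. No gaps.
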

\begin{proof}
The proof is similar to that of Theorem \ref{t5}, by replacing $A$ and $B$ with $e^H$ and $e^K$, and $h=e^{M-m}$ in the inequality \eqref{3}.
\end{proof}
\begin{remark}
Under the different conditions, the different coefficients are not comparable. But it is known that if we have a certain statement under the sandwich condition $0<sA \leq B \leq tA$, then the same statement is also true under the condition $0<mI \leq A , B \leq MI $ and $0<mI \leq A \leq B \leq MI \leq I$. Hence, we can compare the following special cases:
\begin{enumerate}
\item[(1)] Comparison of the constants in Theorem \ref{t3} and in Theorem \ref{t2}: \\
Let $ e^mI \preceq_{ols} e^H \preceq_{ols} e^K \preceq_{ols} e^M I \preceq_{ols} I$. Operator monotony of $\log(t)$ leads to $mI \leq H \leq K \leq MI \leq I$, and so $mI \leq H, K \leq MI $. Now by applying Theorem \ref{t2} we have
 \begin{align*}
\lambda_k (e^{(1-\alpha)H + \alpha K}) \leq K(e^{2p(M-m)},\alpha)^{-\frac{1}{p}}  \lambda_k (e^{pH} \sharp_\alpha e^{pK})^{\frac{1}{p}}, \hspace*{0.8cm} p>0.
\end{align*}
Also, by Theorem \ref{t3}
\begin{align*} \label{}
  \lambda_k (e^{(1-\alpha)H + \alpha K} )  \leq \exp \big(\frac{1}{p} \alpha ( 1- \alpha) (1-\frac{1}{e^{p(M-m)}})^2 \big)   \lambda_k (e^{pH} \sharp_\alpha e^{pK})^{\frac{1}{p}},\hspace*{0.8cm} p>0.
\end{align*}
Letting $h=e^{M-m}\geq1$, the following numerical examples show that there is no ordering between these inequalities.
\begin{itemize}
\item[(i)] Take $\alpha=\frac{1}{2}$, $p=\frac{1}{2}$ and $h=2$, then we have
\begin{align*}
K(h^{2p},\alpha)^{-\frac{1}{p}}- \exp \big(\frac{1}{p} \alpha ( 1- \alpha) (1-\frac{1}{h^p})^2 \big)  \simeq -0.0134963.
\end{align*}

\item[(ii)] Take $\alpha=\frac{1}{2}$, $p=\frac{1}{2}$ and $h=8$, then we have
\begin{align*}
K(h^{2p},\alpha)^{-\frac{1}{p}}- \exp \big(\frac{1}{p} \alpha ( 1- \alpha) (1-\frac{1}{h^p})^2 \big)  \simeq 0.0631159.
\end{align*}
\end{itemize}
\item[(2)] Comparison of the constants in Lemma \ref{l3} and in Lemma \ref{l1}:\\
Let $0<mI \leq A \leq B \leq MI \leq I$. Then the following sandwich condition is obtained 
\begin{align*}
  m \leq \frac{m}{M} \leq 1 \leq A^{-\frac{1}{2}} B A^{-\frac{1}{2}}\leq \frac{M}{m} \leq \frac{1}{m}.
\end{align*}
Now by letting $s=1$ and $t= \frac{M}{m}=h$ in Lemma \ref{l1}, we get
\begin{align}\label{1}
A^r \sharp_\alpha B^r \leq S(h)^r (A\sharp_\alpha B)^r,  \hspace*{1cm} 0 < r \leq 1.
\end{align}
Also, by Lemma \ref{l3}
 \begin{align} \label{2}
  A^r \sharp_\alpha B^r \leq   \exp \big( r \alpha ( 1- \alpha) (1-\frac{1}{h})^2 \big) ( A \sharp_\alpha B )^r, \hspace*{1cm} 0< r \leq 1.
 \end{align}
It is shown in \cite[Remark 2.4]{FM} that there is no ordering between coefficients of \eqref{1} and \eqref{2}. Therefore, we may conclude evaluation of Lemma \ref{l3} and Lemma \ref{l1} are different.

\end{enumerate}
\end{remark}

\bibliographystyle{amsplain}

\end{document}